\newcommand{\xVar}{\boldsymbol{x}}
\newcommand{\ID}{i}
\newcommand{\dimXVar}{n}
\newcommand{\dimID}{m}
\newcommand{\const}{d}
\newcommand{\constC}{c}
\newcommand{\ineqVar}{\lambda}
\newcommand{\ineqVars}{\boldsymbol{\ineqVar}}
\newcommand{\eqVar}{s}
\newcommand{\KKTVar}{(\ineqVars,\,\eqVar)}
\newcommand{\targetFcnHead}{J}
\newcommand{\targetFcn}{\targetFcnHead(\xVar,\,\ID)}
\newcommand{\realProbHead}{p}
\newcommand{\realProb}{\realProbHead(\ID)}
\newcommand{\probHead}{\hat{p}}
\newcommand{\prob}{\probHead(\ID)}
\newcommand{\refProbHead}{\realProbHead_0}
\newcommand{\refProb}{\refProbHead(\ID)}
\newcommand{\densRatioHead}{r}
\newcommand{\densRatio}{\densRatioHead_\ID}
\newcommand{\inner}{\expectation{\prob}{\targetFcn}}
\newcommand{\innerMax}{\max_{\probHead\in\ambiguity}\quad\inner}
\newcommand{\innerHat}{\max_{\probHead\in\hatAmbiguity}\quad\inner}
\newcommand{\innerHatS}{\max_{\probHead\in\hatAmbiguity}\inner}
\newcommand{\innerDual}{\expectation{\refProb}{\dualExtended}}
\newcommand{\lagrangeanHead}{l_{\xVar}}
\newcommand{\lagrangean}{\lagrangeanHead(\densRatioHead_1,\,\cdots,\,\densRatioHead_\dimID,\ineqVars,\,\eqVar)}
\newcommand{\lagrangeDualHead}{g_{\xVar}}
\newcommand{\lagrangeDual}{\lagrangeDualHead(\ineqVars,\,\eqVar)}
\newcommand{\dualHead}{h}
\newcommand{\dual}{\dualHead(\xVar,\,\ID,\,[\ineqVars]_\ID,\,\eqVar)}
\newcommand{\dualExtended}{\tilde{\dualHead}(\xVar,\,\ID,\,[\ineqVars]_\ID,\,\eqVar)}
\newcommand{\ambiguityKnapsack}{\mathcal{Z}}
\newcommand{\zVar}{\boldsymbol{z}}
\newcommand{\innerKnapsack}{\sum_{\ID\in\setID}\refProb\,\targetFcn\,[\zVar]_\ID}
\newcommand{\innerKnapsackUniform}{ 
\sum_{l=1}^{\constC}\frac{\targetFcnHead(\xVar,\,\ID_l)}{\constC}}
\newcommand{\innerMaxKnapsackUniform}{\max_{(\ID_1,\,\cdots,\,\ID_{\constC})\in\ambiguityC}\quad 
\innerKnapsackUniform}
\newcommand{\ambiguityC}{\ambiguityKnapsack_\constC}
\newcommand{\ambiguityTV}{\ambiguity_{TV}}
\newcommand{\relint}{\mathrm{relint}}
\newcommand{\vect}{\mathrm{vec}}
\newcommand{\eye}{\boldsymbol{I}}
\newcommand{\diag}{\mathrm{diag}}
\newcommand{\lnFcn}{\ln}
\newcommand{\anyVector}{\boldsymbol{v}}
\newcommand{\anyMatrix}{\boldsymbol{C}}
\newcommand{\anySet}{\mathcal{S}}
\newcommand{\anyFcn}{\boldsymbol{f}}
\newcommand{\anyRandom}{s}
\newcommand{\real}{\mathbb{R}}
\newcommand{\realPlus}{[0,\,\infty)}
\newcommand{\realPlusPlus}{(0,\,\infty)}
\newcommand{\setXVar}{\mathcal{X}}
\newcommand{\setID}{{\Omega}}
\newcommand{\ambiguity}{\mathcal{W}}
\newcommand{\hatAmbiguity}{\hat{\mathcal{W}}}
\newcommand{\probabilityHead}{\mathcal{P}}
\newcommand{\probability}{\probabilityHead(\setID)}
\newcommand{\expectation}[2]{\mathrm{E}_{#1}[#2]}
\newcommand{\distribution}{\mathbb{P}}
\theoremstyle{definition}
\newtheorem{dfn}{Definition}
\newtheorem{lem}[dfn]{Lemma}
\newtheorem{thm}[dfn]{Theorem}
\newtheorem{rem}[dfn]{Remark}
\title{\LARGE \bf Discrete Distributionally Robust Optimal Control with Explicitly Constrained Optimization}
\author{Yuma Shida$^{1}$ and Yuji Ito$^{1}$
\thanks{*This work has been submitted to the IEEE for possible publication. Copyright may be transferred without notice, after which this version may no longer be accessible. This work was not supported by any organization.}
\thanks{$^{1}$Toyota Central R\&D Labs., Inc., 41-1, Yokomichi, Nagakute, Aichi, 480-1192, Japan.
        {\tt\small \{Yuma.shida.fw, ito-yuji\}@mosk.tytlabs.co.jp}
}%
}
\begin{document}

\maketitle
\thispagestyle{empty}
\pagestyle{empty}

\begin{abstract}

Distributionally robust optimal control (DROC) is gaining interest. This study presents a reformulation method for discrete DROC (DDROC) problems to design optimal control policies under a worst-case distributional uncertainty. The reformulation of DDROC problems impacts both the utility of tractable improvements in continuous DROC problems and the inherent discretization modeling of DROC problems. DROC is believed to have tractability issues; namely, infinite inequalities emerge over the distribution space. Therefore, investigating tractable reformulation methods for these DROC problems is crucial. One such method utilizes the strong dualities of the worst-case expectations. However, previous studies demonstrated that certain non-trivial inequalities remain after the reformulation. To enhance the tractability of DDROC, the proposed method reformulates DDROC problems into one-layer smooth convex programming with only a few trivial inequalities. The proposed method is applied to a DDROC version of a patrol-agent design problem.

\end{abstract}

\section{Introduction}

Real-world systems involve several uncertainties. For example, it is difficult for security robots\cite{duan2021markov,diaz2023distributed} to predict the location of a noteworthy event. A popular approach for controlling these uncertain systems is to minimize the control costs with a stochastic model of uncertainty: stochastic optimal control (SOC)\cite{bertsekas1996stochastic,crespo2003stochastic}. If a stochastic model is expensive and/or difficult to obtain, robust control (RC)\cite{sastry2011adaptive,slotine1985robust,scherer2001theory} can adress a wide range of uncertainties by considering the worst-case value of the control costs. However, using RC carries the risk of conservative control results.

Distributionally robust optimal control (DROC) has been recently developed to overcome the conventional challenges of both SOC and RC. 
DROC techniques can enhance the robustness against distributional uncertainties arising from mismatches between real-world systems and stochastic models, such as the well-known Gaussian noise assumptions \cite{nishimura2021rat}. DROC aims to minimize the expectation of the cost function using a worst-case distribution \cite{taskesen2024distributionally,liu2023data,yang2020wasserstein,nishimura2021rat,nguyen2023distributionally,coulson2021distributionally}. Distributionally robust constraints regarding the value at risk have also been considered \cite{van2015distributionally,pilipovsky2024distributionally}. These distributional uncertainties are known as ambiguity sets, which are expressed by bounds or balls of statistical measures between probability distributions, such as $\phi$-divergence \cite{liu2023data,nishimura2021rat,hu2013kullback} and optimal transport distance \cite{taskesen2024distributionally,yang2020wasserstein,nguyen2023distributionally,pilipovsky2024distributionally,coulson2021distributionally,gao2023distributionally,shafieezadeh2019regularization,cherukuri2022data,luo2017decomposition,mohajerin2018data}. 


The solvability of the DROC problems is a critical problem. Specifically, semi-infinite programming (SIP), which involves infinite inequalities\cite{reemtsen2013semi},  generally emerges in DROC problems \cite{yang2020wasserstein,shafieezadeh2019regularization,cherukuri2022data,mehrotra2014cutting,luo2017decomposition,mohajerin2018data,wiesemann2014distributionally}. These infinite inequalities remain computational challenges must be solved. There are two approaches for solving the SIP: 1) directly solving the SIP using several algorithms such as the cutting-surface method\cite{shafieezadeh2019regularization,cherukuri2022data,mehrotra2014cutting,luo2017decomposition,mohajerin2018data}, and 2) reformulating the SIP into finite convex programming via a strong duality, 
such as the Kantorovich duality\cite{mohajerin2018data,gao2023distributionally,yang2020wasserstein}, duality of conic linear programming\cite{shafieezadeh2019regularization}, and duality of convex optimization\cite{miao2021data}. Other previous studies have reformulated SIP into semi-definite programming \cite{pilipovsky2024distributionally,van2015distributionally,staib2019distributionally,gao2023distributionally,wiesemann2014distributionally}. Meanwhile, the existing studies\cite{yang2020wasserstein,liu2023data,taskesen2024distributionally} have avoided SIP by choosing specific problem settings, such as linear-quadratic settings.

Discrete DROC (DDROC) affects both tractable discretization \cite{liu2019discrete} and inherent discrete modeling \cite{farokhi2023distributionally} for DROC. The discretization of probability distributions is believed to play a role in improving the tractability of distributionally robust optimization (DRO) problems where limited empirical data are available \cite{liu2019discrete}. Furthermore, this DDROC and discrete DRO (DDRO) also emerge when considering distributions of discrete modeling that haves finite space\cite{farokhi2023distributionally,zhang2021efficient}; in fact, robotic surveillance studies have used discrete modeling of finite locations\cite{duan2021markov,diaz2023distributed}. Compared to continuous DROC, discrete distributions are suitable for representing inherently multi-modal distributions. 
In this study, we establish more tractable DDROC problems than those reported in previous studies \cite{farokhi2023distributionally,zhang2021efficient}. Our proposed method reformulates min-max problems encountered in solving the DDROC into one-layer smooth convex programming with only a few trivial constraints. Specifically, the constraints reduce to non-negativeness of the dual minimizers (Lagrange multipliers). In previous studies, reformulated DDROC and DDRO problems either contained certain non-trivial inequalities or were not one-layer smooth convex programming problems. 

Our contributions include solvability, explainability, and demonstration. 
First, regarding solvability, our setting of distributional uncertainty, called the ambiguity set, realizes the aforementioned reformulation to yield one-layer smooth
convex programming with trivial constraints. The ambiguity set is defined using a density ratio between a nominal probability density and a density. 
Second, in terms of explainability, the problems reformulated by our proposed method can be explained as deterministic RC problems, in which a maximizer corresponds to a collection of discrete variables. Finally, we demonstrate that our reformulated problems can be solved by general convex programming through numerical experiments on the patroller-agent design of \cite{diaz2023distributed}. This design was originally proposed as a minimization problem of either a weighted average value or the worst-case value of a mean hitting time, instead, we adapt it to fit the framework of DDROC problems. 

\section*{Notation}
The following notations are used in this study:
\begin{itemize}
\item $\eye_a$: $a\times a$ identity matrix 


\item $[\anyVector]_j$: component in the $j$-th element of a vector $\anyVector\in\real^a$

\item $[\anyMatrix]_{j,\,k}$: component in the $j$-th row and $k$-th column of a matrix $\anyMatrix\in\real^{a\times b}$

\item $\diag(\anyVector) \coloneq 
\begin{bmatrix}
[\anyVector]_{1} & & 0 \\
 & \ddots & \\
0 & & [\anyVector]_{a}
\end{bmatrix}
$: diagonal matrix of the components of a vector $\anyVector\in\real^{a\times a}$

\item $\vect(\anyMatrix) \coloneq
[
[\anyMatrix]_{1,\,1}  \cdots  [\anyMatrix]_{a,\,1}  \cdots  [\anyMatrix]_{1,\,b}  \cdots  [\anyMatrix]_{a,\,b} 
]
^\top$: vectorization of the components of a matrix $\anyMatrix\in\real^{a\times b}$ 


\item $\relint(\anySet)$: relative interior of a set $\anySet\subseteq\real^a$






\item $\displaystyle\probabilityHead(\anySet) \coloneq \{\realProbHead:\anySet\rightarrow\realPlus~|~\sum_{\anyRandom\in\anySet}\realProbHead(\anyRandom)=1\}$: set of all the probability mass functions of a random discrete variable $\anyRandom\in\anySet$ on a finite set $\anySet$ 


\item $\expectation{\realProbHead(\anyRandom)}{\anyFcn(\anyRandom)}$: expectation of $\anyFcn(\anyRandom)$ with respect to a random variable $\anyRandom$ which has a probability density $\realProbHead(\anyRandom)$

\item $\distribution[\anyRandom\in\anySet]$: probability that a random variable $\anyRandom$ belongs to an event $\anySet$

\item $\distribution[\anyRandom_1\in\anySet_1~|~\anyRandom_2\in\anySet_2]$: conditional  probability that a random variable $\anyRandom_1$ belongs to an event $\anySet_1$ under the condition that a random variable $\anyRandom_2$ belongs to an event $\anySet_2$ 
\end{itemize}

\section{Target Systems and Problems Setting}

\subsection{Target Systems}
We consider a target system that has a decision variable $\xVar\in\setXVar\subseteq\real^\dimXVar$ on a set $\setXVar$ and a random variable $\ID\in\setID$ with a probability density $\realProbHead\in\probability$ on a discrete finite set $\setID=\{1,\,2,\,\cdots,\,m\}$. This study assumes that the probability density $\realProb$ is unknown.   
The performance of the target system is indicated by expectation $\expectation{\realProb}{\targetFcn}$ of the cost function $\targetFcn$. The cost function $\targetFcnHead:\real^\dimXVar\times\setID\to\mathbb{R}$ represents the control objective to be minimized. For example, in robotic control, this may be the time required for the robot to reach its target location. The definition of $\targetFcn$ indicates that for each $\xVar$, $\expectation{\realProb}{\targetFcn} < \infty$.




\subsection{Discrete Distributionally Robust Optimal Control Problems}

We consider the DDROC problems associated with the target system and the following ambiguity set, assuming that it contains the probability density $\realProb$ as follows:

\begin{equation}\label{eq:ambiguity}
\ambiguity \coloneqq \{\probHead\in\probability~|~\forall \ID\in\setID, ~ \densRatio\le1+\const\}.
\end{equation}
Here, $\densRatio\coloneq\prob/\,\refProb\in\realPlus$ is the probability density ratio between a nominal probability density $\refProbHead\in\probability$ and any probability density $\probHead\in\probability$. Let us suppose that $\refProb>0$ is satisfied for each $\ID\in\setID$. A positive constant $\const>0$ controls the size of the ambiguity set in (\ref{eq:ambiguity}).

\textit{DDROC problem:} A decision variable is designed to minimize the worst-case expectation of the cost function of the target system in the ambiguity set:

\begin{equation}\label{eq:DROC}
\min_{\xVar\in\setXVar}\quad\innerMax.
\end{equation}



\begin{rem}[Differentiable Subset of the Optimal Transport Ball]
The density ratio is useful because it is smooth (differentiable), unlike the total variation (TV) distance, which is equal to half of $L_1$ distance. Consider the following TV ball:

\begin{equation}\label{eq:ambiguityTV}
\ambiguityTV \coloneq \{\probHead\in\probability~|~\expectation{\refProb}{|\densRatio-1|}\le\const\},
\end{equation}
which is a special case of optimal transport balls \cite{villani2009optimal}. 
Based on the density-ratio ball $\ambiguity$ denoted in (\ref{eq:ambiguity}), any probability density $\probHead\in\ambiguity$ satisfies $\probHead\in\ambiguityTV$ if $\const\ge1$. 
Different types of relationships between the density ratio and the TV distance or other measures can be observed in \cite[Proposition 1]{10.3150/20-BEJ1256}. 
These facts imply that the density-ratio ball $\ambiguity$ is a subset of the TV ball $\ambiguityTV$ in (\ref{eq:ambiguityTV}). 
\end{rem}

\section{Proposed Method}

To solve the aforementioned DDROC problem, we reformulate the min-max problem (\ref{eq:DROC}) into a tractable form in Section \ref{sec:mainResult}. Specifically, Theorem \ref{thm:strongDual} shows that (\ref{eq:DROC}) reduces to a minimization problem with only trivial inequalities. Theorem \ref{thm:knapsack} implies that (\ref{eq:DROC}) can be interpreted as a deterministic RC problem, which provides an intuition of the problem. Section \ref{sec:subResult} describes the proofs of these theorems.

\subsection{Main Results: Reformulation of Discrete Distributionally Robust Control Problems}\label{sec:mainResult}
We introduce the Lagrange multipliers $\ineqVars\in\realPlus^\dimID$ and $\eqVar\in\real$.  We also introduce the following problem:

\begin{equation}\label{eq:dualDROC}
\min_{\xVar\in\setXVar}\quad\inf_{(\ineqVars,\eqVar)\in\realPlus^\dimID\times\real}\innerDual.
\end{equation}
Here, ${\tilde{\dualHead}} :\setXVar\times\setID\times\real\times\real\rightarrow\real\cup\{\infty\}$ is denoted as the following functions:

\begin{equation}\label{eq:zeroWorst}
\dualExtended \coloneqq
\begin{cases}
\eqVar, & \begin{aligned}(&[\ineqVars]_\ID=0, ~ \\ &\targetFcn\le\eqVar),\end{aligned} \\
\dual, & ([\ineqVars]_\ID\ne0),\\
\infty, & \begin{aligned}(&[\ineqVars]_\ID=0, ~ \\ &\targetFcn>\eqVar),\end{aligned}
\end{cases}
\end{equation}

\begin{equation}\label{eq:worst}
\dual \coloneqq (1+\const)\,[\ineqVars]_\ID\exp(\frac{\targetFcn-[\ineqVars]_\ID-\eqVar}{[\ineqVars]_\ID})+\eqVar.
\end{equation}

\begin{thm}[Tractable Discrete Distributionally Robust Control Problems]\label{thm:strongDual}
The problem in (\ref{eq:dualDROC}) satisfies the following properties.
\begin{itemize}
\item Optimal minimizers of $\xVar$ to (\ref{eq:dualDROC}) are equivalent to those to the DDROC problem in (\ref{eq:DROC}):

\item If the cost function $\targetFcn$ is strictly convex and continuous on a bounded closed convex set $\setXVar$ for each $\ID\in\setID$, the optimal minimizer to (\ref{eq:dualDROC}) is unique.

\item If the cost function $\targetFcn$ is convex and class $C^k$ on a open convex set $\setXVar$ for each $\ID\in\setID$, the objective function of (\ref{eq:dualDROC}) is also convex and class $C^k$ on $\setXVar\times\realPlusPlus^\dimID\times\real$.
\end{itemize}
\end{thm}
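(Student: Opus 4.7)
The plan is to reduce the inner maximization in (\ref{eq:DROC}) to a finite linear program, derive its Lagrangian dual, and identify that dual with the exponential form in (\ref{eq:dualDROC}) via a perspective-of-$\exp$ identity. The reparametrization $\probHead=\refProbHead\cdot\densRatioHead$ turns $\ambiguity$ into the polytope $\{\densRatioHead\in\realPlus^{\dimID}\mid \densRatio\le 1+\const\text{ for all }\ID,\ \sum_\ID\refProb\densRatio=1\}$, so for each fixed $\xVar$ the inner problem is a bounded LP with the Slater point $\densRatioHead\equiv 1$, and LP strong duality applies. A direct Lagrangian calculation with multipliers $[\ineqVars]_\ID\ge 0$ (for the box constraints) and $\eqVar\in\real$ (for the normalization) produces
\[
\min_{\eqVar,\ineqVars\ge 0}\ \sum_\ID(1+\const)[\ineqVars]_\ID+\eqVar \quad\text{s.t.}\quad [\ineqVars]_\ID\ge\refProb(\targetFcn-\eqVar),
\]
which after eliminating $\ineqVars$ by $[\ineqVars]_\ID=\max(0,\refProb(\targetFcn-\eqVar))$ collapses to $\inf_\eqVar\{\eqVar+(1+\const)\expectation{\refProb}{\max(0,\targetFcn-\eqVar)}\}$.

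To bridge to the exponential form I would establish the perspective identity $\inf_{\lambda>0}\lambda\exp((u-\lambda)/\lambda)=\max(0,u)$: setting the derivative in $\lambda$ to zero gives the optimizer $\lambda^\star=u$ with value $u$ for $u>0$, while $\lambda\to 0^+$ drives the expression to $0$ for $u\le 0$. The lower-semicontinuous extension at $\lambda=0$ is therefore $0$ on $\{u\le 0\}$ and $+\infty$ on $\{u>0\}$, precisely the case distinction defining $\dualExtended$ in (\ref{eq:zeroWorst}) with $u=\targetFcn-\eqVar$ and an additive $\eqVar$. Because $[\ineqVars]_\ID$ occurs only in the $\ID$-th summand of $\expectation{\refProb}{\dualExtended}$, the infimum in (\ref{eq:dualDROC}) decouples across $\ID\in\setID$, and applying the identity componentwise reproduces the LP dual above verbatim. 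Hence $\innerMaxS=\inf_{(\ineqVars,\eqVar)\in\realPlus^\dimID\times\real}\expectation{\refProb}{\dualExtended}$ holds pointwise in $\xVar$, so the two outer minimization problems share the same $\xVar$-minimizers.

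Uniqueness then falls out immediately: if $\targetFcn$ is strictly convex in $\xVar$, each $\expectation{\prob}{\targetFcn}$ is strictly convex, hence so is the pointwise maximum over $\probHead\in\ambiguity$, and a strictly convex continuous function on a bounded closed convex $\setXVar$ admits a unique minimizer. For convexity and $C^k$-smoothness on $\setXVar\times\realPlusPlus^\dimID\times\real$, I first rewrite $(1+\const)[\ineqVars]_\ID\exp((\targetFcn-[\ineqVars]_\ID-\eqVar)/[\ineqVars]_\ID)=\tfrac{1+\const}{e}[\ineqVars]_\ID\exp((\targetFcn-\eqVar)/[\ineqVars]_\ID)$, so that the term is, up to the constant $\tfrac{1+\const}{e}$, the perspective of $\exp$ evaluated at $(\targetFcn-\eqVar,[\ineqVars]_\ID)$. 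The standard perspective-composition lemma---$b\,g(u(\xVar)/b)$ is jointly convex in $(\xVar,b>0)$ whenever $g$ is convex and nondecreasing and $u$ is convex---applied with $g=\exp$ and $u(\xVar,\eqVar)=\targetFcn-\eqVar$ delivers joint convexity in $(\xVar,\eqVar,[\ineqVars]_\ID)$; $\refProb$-averaging and summing over $\ID$ preserve it. The $C^k$ regularity is immediate since $\targetFcn$ is $C^k$ in $\xVar$ and $(a,b)\mapsto b\exp((a-b)/b)$ is $C^\infty$ on $\real\times\realPlusPlus$.

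The main obstacle I anticipate is justifying the extended definition in (\ref{eq:zeroWorst}): one must verify that assigning $+\infty$ on $\{[\ineqVars]_\ID=0,\,\targetFcn>\eqVar\}$ is the \emph{correct} lower-semicontinuous closure and does not shift the attained infimum away from the LP dual's value. Closely tied to this is the joint-convexity verification, which only goes through after the algebraic rewrite that disentangles the double appearance of $[\ineqVars]_\ID$ inside and outside the exponential; without that preparatory step the perspective lemma cannot be invoked directly.
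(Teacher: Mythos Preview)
Your argument is correct, but it takes a genuinely different route from the paper. The paper does \emph{not} dualize the linear constraint $r_i\le 1+d$ directly. Instead, it first rewrites the ambiguity set via the equivalent entropic constraint $r_i\ln r_i\le r_i\ln(1+d)$ (its Lemma~\ref{lem:ambiguity}); dualizing this nonlinear constraint makes the Lagrangian concave in $r_i$ with a $r_i[\ineqVars]_i\ln((1+d)/r_i)$ term, and the stationarity condition $\partial_{r_i}l_{\xVar}=0$ yields $r_i^\star=(1+d)\exp((J-[\ineqVars]_i-s)/[\ineqVars]_i)$, so the exponential form of $h$ drops out of the Lagrangian with no further identity needed (Lemma~\ref{lem:lagrangeDual}). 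By contrast, you keep the LP, reach the CVaR-type dual $\inf_s\{s+(1+d)\expectation{\refProb}{\max(0,\targetFcn-s)}\}$, and then recognize the paper's exponential expression as a smooth over-parametrization of $\max(0,\cdot)$ through the scalar identity $\inf_{\lambda>0}\lambda\exp((u-\lambda)/\lambda)=\max(0,u)$. Your path is more elementary (only LP duality and a one-variable calculus fact) and has the bonus of exposing that the $[\ineqVars]_i$ in (\ref{eq:dualDROC}) are smoothing variables that can be optimized out to a CVaR form; the paper's path explains \emph{why} the exponential appears at all (it is the conjugate of the entropic rewriting) and yields $\dual$ without an intermediate expression. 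For the third item, the paper again argues convexity indirectly---the Lagrangian $l_{\xVar}$ is jointly convex in $(\xVar,\ineqVars,s)$ for each fixed $r$, hence so is its supremum $g_{\xVar}=\expectation{\refProb}{\tilde h}$---whereas your perspective-composition computation works directly on the closed form of $h$; both are valid. One small caution: the multipliers $[\ineqVars]_i$ in your LP dual and those in (\ref{eq:dualDROC}) are different objects related by a $p_0(i)$ scaling, so it is worth keeping the notation separate when writing the final version.
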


\begin{rem}[Solvability of the DDROC Problems]
The problem in (\ref{eq:dualDROC}) is derived from the DDROC problem in (\ref{eq:DROC}). It is a one-layer minimization (infimum) problem with only trivial inequalities, $[\ineqVars]_\ID\ge0$ for all $\ID\in\setID$. 
Futhermore, if the cost function $\targetFcn$ is convex and sufficiently smooth (continuously differentiable) on a convex set $\setXVar$ for each $\ID\in\setID$, the problem in (\ref{eq:dualDROC}) becomes a one-layer smooth convex programming that can be solved by general gradient-based algorithms such as the interior point method\cite{boyd2004convex}. 


\end{rem}

Subsequently, we present another reformulation of (\ref{eq:DROC}) to interpret the physical meaning of the ambiguity set. We introduce a deterministic variable $\zVar\in\real^\dimID$ and a set of the variable $\zVar$; each component of the variable with a weight $(1+\const)\,\refProb$ is constrained as follows: 

\begin{equation}\label{eq:knapsack}
\ambiguityKnapsack 
\coloneq \{\zVar\in[0,\,1]^\dimID~|~(1+\const)\sum_{\ID\in\setID}\refProb\,[\zVar]_\ID=1 \}. 
\end{equation}
We also introduce the deterministic RC problem in which the total of each $[\zVar]_\ID$ with the weighted cost $\refProb\,(1+\const)\,\targetFcn$ is maximized in (\ref{eq:knapsack}), as follows:

\begin{equation}\label{eq:knapsackRC}
\min_{\xVar\in\setXVar}\quad\max_{\zVar\in\ambiguityKnapsack}\quad \innerKnapsack.
\end{equation}

\begin{thm}[Deterministic RC Problems with Weights]\label{thm:knapsack}
The problem in (\ref{eq:DROC}) satisfies the following properties:
\begin{itemize}
\item Optimal minimizers of $\xVar$ to (\ref{eq:DROC}) are equivalent to those to the RC problem in (\ref{eq:knapsackRC}).
\item If the nominal probability is uniform, that is, $\refProb=1/\,\dimID$, optimal minimizers to (\ref{eq:DROC}) are equivalent to those to the following RC problem: 

\begin{equation}\label{eq:knapsackRC2}
\min_{\xVar\in\setXVar}\quad\innerMaxKnapsackUniform,
\end{equation}
\begin{equation}
\ambiguityC\coloneq\{(\ID_1,\,\cdots,\,\ID_\constC)\in\setID^\constC~|~\forall(j,\,k)\in\setID\times\setID,~\ID_j\ne\ID_k\},
\end{equation}
for $\constC\coloneq\dimID/\,(1+\const)$, provided that $\constC$ is an integer. 
\end{itemize}
\end{thm}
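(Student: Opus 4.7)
The plan is to prove Theorem \ref{thm:knapsack} by a change of variables that turns the worst-case expectation into a linear program (LP) over a polytope, followed by a standard LP vertex argument in the uniform case.

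For the first claim, I would introduce the substitution $[\zVar]_\ID = \densRatio/(1+\const)$ for each $\ID\in\setID$, equivalently $\prob=(1+\const)\,\refProb\,[\zVar]_\ID$. This defines a bijection between $\probHead\in\ambiguity$ and $\zVar\in\ambiguityKnapsack$: the pointwise constraint $0\le\densRatio\le1+\const$ in (\ref{eq:ambiguity}) is equivalent to $[\zVar]_\ID\in[0,1]$, and the mass constraint $\sum_{\ID\in\setID}\prob=1$ is equivalent to $(1+\const)\sum_{\ID\in\setID}\refProb\,[\zVar]_\ID=1$, which is exactly the defining equality of $\ambiguityKnapsack$ in (\ref{eq:knapsack}). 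Under this bijection,
\begin{equation*}
\inner = \sum_{\ID\in\setID}\refProb\,\densRatio\,\targetFcn = (1+\const)\sum_{\ID\in\setID}\refProb\,\targetFcn\,[\zVar]_\ID,
\end{equation*}
so for every fixed $\xVar\in\setXVar$ the inner worst-case expectation in (\ref{eq:DROC}) equals $(1+\const)$ times the inner maximum in (\ref{eq:knapsackRC}). Since $1+\const>0$ does not depend on $\xVar$, the outer $\argmin_\xVar$ of the two problems coincides.

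For the second claim, specializing to $\refProb=1/\dimID$ reduces the feasible set in (\ref{eq:knapsack}) to the hypersimplex $\{\zVar\in[0,1]^\dimID : \sum_{\ID\in\setID}[\zVar]_\ID=\constC\}$ for integer $\constC=\dimID/(1+\const)$. For fixed $\xVar$, the inner maximization of $\sum_{\ID\in\setID}\refProb\,\targetFcn\,[\zVar]_\ID$ is an LP on a nonempty bounded polytope, so its maximum is attained at an extreme point. The extreme points of this hypersimplex are precisely the $0/1$-vectors with exactly $\constC$ ones, each corresponding to a selection of $\constC$ distinct indices; since the objective of (\ref{eq:knapsackRC2}) is permutation-invariant in its tuple argument, the maximum over such vertices equals the maximum over $(\ID_1,\cdots,\ID_\constC)\in\ambiguityC$. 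At such a vertex the objective evaluates to $\sum_{l=1}^{\constC}\targetFcnHead(\xVar,\ID_l)/\dimID$, which differs from the objective of (\ref{eq:knapsackRC2}) by the positive constant $\constC/\dimID = 1/(1+\const)$. Chaining with the first part, the outer $\argmin_\xVar$ of (\ref{eq:DROC}) coincides with that of (\ref{eq:knapsackRC2}).

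The delicate step is the characterization of the vertices of the hypersimplex. I would either invoke total-unimodularity of its defining constraint matrix, or give a short self-contained swap argument: a feasible $\zVar$ with two fractional components $[\zVar]_{j_1},[\zVar]_{j_2}\in(0,1)$ admits a small perturbation that shifts equal mass between coordinates $j_1$ and $j_2$, staying inside $\ambiguityKnapsack$, which contradicts $\zVar$ being an extreme point. Everything else is bookkeeping of positive constant prefactors which must be tracked carefully but does not affect the set of $\xVar$-minimizers.
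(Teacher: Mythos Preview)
Your proof is correct and follows essentially the same route as the paper: the identical change of variables $[\zVar]_\ID=\densRatio/(1+\const)$ for the first statement, and a vertex argument on the resulting hypersimplex for the second. The paper invokes a general convex-maximization-at-extreme-points theorem and simply asserts that the vertex set is $\ambiguityKnapsack\cap\{0,1\}^\dimID$, whereas you observe that the inner problem is in fact linear and supply an explicit swap (or total-unimodularity) argument for the hypersimplex vertices---a small but welcome bit of added rigor.
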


\begin{rem}[Size Explanation of the Ambiguity Set]
The problem in (\ref{eq:knapsackRC2}), which is the minimization of the total of the worst $\constC$ collection of costs, can be used to explain the size of ambiguity set $\ambiguity$. If $\constC$ nears one, the size of ambiguity set, $\const$, increases. In addition, the problem nears minimization of the worst-case cost. While $\constC$ increases, $\const$ decreases and the problem nears minimization of the average value of the costs.
\end{rem}

\subsection{Proofs of Theorems \ref{thm:strongDual} and \ref{thm:knapsack}}\label{sec:subResult}

We prove Theorems \ref{thm:strongDual} and \ref{thm:knapsack} after deriving the following Lemmas \ref{lem:ambiguity} and \ref{lem:lagrangeDual}.

Set $\hatAmbiguity$ is denoted as follows: 

\begin{equation}\label{eq:hatAmbiguity}
\hatAmbiguity \coloneq \{\probHead\in\probability~|~\forall \ID\in\setID, ~ \densRatio\lnFcn(\densRatio)\le\densRatio\lnFcn(1+\const)\}, 
\end{equation}
where this study defines $0\lnFcn(0)=0$. This is owing to the continuity as $\lim_{\densRatio\to+0}\densRatio\lnFcn(\densRatio)=0$ \cite[Section 2.1]{cover1999elements}. 

\begin{lem}[Equality of the Ambiguity Set]\label{lem:ambiguity}
Set $\hatAmbiguity$ in (\ref{eq:hatAmbiguity}) is equivalent to the ambiguity set in (\ref{eq:ambiguity}) as follows:

\begin{equation}\label{eq:ambiguityReformulation}
\hatAmbiguity=\ambiguity. 
\end{equation}
\end{lem}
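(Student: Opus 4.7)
The plan is to prove the set equality $\hatAmbiguity = \ambiguity$ by showing that, for each fixed $\ID\in\setID$ and nonnegative density ratio $\densRatio\ge 0$, the pointwise inequality defining $\hatAmbiguity$ is equivalent to the pointwise inequality defining $\ambiguity$. Since both sets are subsets of $\probability$ and are defined by conditions that must hold for every $\ID\in\setID$ separately, pointwise equivalence immediately yields the global equality.

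The first step is to handle the degenerate case $\densRatio=0$. By the convention $0\lnFcn(0)=0$, the left-hand side of the inequality in \eqref{eq:hatAmbiguity} vanishes, and the right-hand side $\densRatio\lnFcn(1+\const)=0$ also vanishes, so the inequality holds trivially; on the other side, $0\le 1+\const$ is automatic since $\const>0$. Hence the $\densRatio=0$ case contributes to both sets.

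The second step treats the case $\densRatio>0$. Here one may divide both sides of $\densRatio\lnFcn(\densRatio)\le\densRatio\lnFcn(1+\const)$ by the positive quantity $\densRatio$ to obtain $\lnFcn(\densRatio)\le\lnFcn(1+\const)$. Since $\lnFcn$ is strictly increasing on $\realPlusPlus$ and $1+\const>0$, this is equivalent to $\densRatio\le 1+\const$, which is precisely the condition appearing in the definition \eqref{eq:ambiguity} of $\ambiguity$.

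Combining the two cases, for each $\ID\in\setID$ the condition $\densRatio\lnFcn(\densRatio)\le\densRatio\lnFcn(1+\const)$ is equivalent to $\densRatio\le 1+\const$, and applying this equivalence across all $\ID\in\setID$ proves \eqref{eq:ambiguityReformulation}. There is no real obstacle here; the only point requiring care is the $\densRatio=0$ boundary case, which is why the convention $0\lnFcn(0)=0$ is explicitly invoked before the lemma statement so that division by $\densRatio$ is only performed when legitimate.
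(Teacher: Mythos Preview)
Your proof is correct and takes essentially the same approach as the paper's own proof: both handle the case $\densRatio=0$ separately using the convention $0\lnFcn(0)=0$, and for $\densRatio>0$ reduce the inequality to $\lnFcn(\densRatio)\le\lnFcn(1+\const)$ via division by the positive $\densRatio$, then invoke monotonicity of $\lnFcn$. Your version is slightly more explicit in checking both inclusions at $\densRatio=0$, but the argument is the same.
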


\begin{proof}[Proof of Lemma \ref{lem:ambiguity}]
Clearly, $\densRatio\lnFcn(\densRatio)\le\densRatio\lnFcn(1+\const)$ if $\densRatio=0$; then, we assume $\densRatio>0$. We have $\lnFcn(\densRatio)\le\lnFcn(1+\const)\Leftrightarrow \densRatio\le1+\const$ because $\ln$ is monotonically increasing. In addition, by multiplying $\densRatio>0$ to the left side, the statement is proven. 
\end{proof}

\begin{lem}[Strong Duality of the Worst Expectation]\label{lem:lagrangeDual}
The following properties are satisfied:

\begin{itemize}
\item For every $\xVar\in\setXVar$, we have:

\begin{multline}\label{eq:inner}
\innerMax = \\
\inf_{\KKTVar\in\realPlus^\dimID\times\real}\innerDual.
\end{multline}

\item If the cost function $\targetFcn$ is a convex function on $\setXVar$ for each $\ID\in\setID$, the objective function on the right side of (\ref{eq:inner}) is convex on $\setXVar\times\realPlus^\dimID\times\real$.
\end{itemize}


\end{lem}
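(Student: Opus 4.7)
The plan is to establish the strong duality in (\ref{eq:inner}) by treating the inner maximization as a convex program in the density-ratio variables and then applying Lagrangian duality. Using Lemma~\ref{lem:ambiguity}, the raw bound $\densRatio\le 1+\const$ can be replaced by the convex inequality $\densRatio\lnFcn(\densRatio)\le\densRatio\lnFcn(1+\const)$. Writing the worst-case expectation in the variables $\densRatio\coloneq\prob/\refProb$ yields a program with linear objective $\sum_{\ID\in\setID}\refProb\densRatio\targetFcn$, convex per-$\ID$ inequalities (since $\densRatio\lnFcn(\densRatio)$ is convex and $\densRatio\lnFcn(1+\const)$ is linear in $\densRatio$), an affine probability-sum constraint $\sum_{\ID\in\setID}\refProb\densRatio=1$, and non-negativity. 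The nominal choice $\densRatio\equiv 1$ attains the sum constraint and strictly satisfies every inequality, because $1\cdot\lnFcn(1)=0<\lnFcn(1+\const)$ whenever $\const>0$; Slater's condition therefore gives strong duality.

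Next I will attach multipliers $\mu_\ID\ge 0$ and $\nu\in\real$ to the inequality and sum constraints, reparameterize via $[\ineqVars]_\ID\coloneq\mu_\ID/\refProb$ and $\eqVar\coloneq\nu$ (valid since $\refProb>0$), and exploit the fact that the Lagrangian decouples across $\ID$. The per-$\ID$ inner problem becomes
\begin{equation*}
\sup_{\densRatio\ge 0}\Bigl\{\refProb(\targetFcn-\eqVar)\densRatio-\refProb\,[\ineqVars]_\ID\,\densRatio\bigl(\lnFcn(\densRatio)-\lnFcn(1+\const)\bigr)\Bigr\}.
\end{equation*}
When $[\ineqVars]_\ID>0$, the first-order condition yields the interior stationary point $\densRatio^\star=(1+\const)\exp((\targetFcn-\eqVar)/[\ineqVars]_\ID-1)$; substituting back gives exactly $\refProb\,\dual$ with $\dual$ as in (\ref{eq:worst}). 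When $[\ineqVars]_\ID=0$, the supremum is linear in $\densRatio$ and evaluates to $0$ if $\targetFcn\le\eqVar$ and $+\infty$ otherwise; combined with the $\eqVar$ contribution extracted from $\nu(1-\sum\refProb\densRatio)$, this yields the two remaining branches of $\dualExtended$ in (\ref{eq:zeroWorst}). Summing across $\ID$ produces $\expectation{\refProb}{\dualExtended}$, which establishes (\ref{eq:inner}).

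For the convexity claim, the argument will rely on the perspective of $\exp$: the map $(a,u)\mapsto a\exp(u/a)$ is jointly convex on $\realPlusPlus\times\real$, is nondecreasing in $u$, and admits the standard lower-semicontinuous extension at $a=0$ equal to $0$ on $\{u\le 0\}$ and $+\infty$ on $\{u>0\}$. Convexity of $\targetFcn$ in $\xVar$ makes $\targetFcn-\eqVar$ convex in $(\xVar,\eqVar)$; composing the perspective with this convex inner map and using the standard composition rule for convex outer functions that are nondecreasing in their inner argument yields joint convexity of $\dualExtended$ in $(\xVar,[\ineqVars]_\ID,\eqVar)$. Adding the affine term $\eqVar$ and taking the non-negatively weighted sum $\expectation{\refProb}{\cdot}$ then gives convexity on $\setXVar\times\realPlus^\dimID\times\real$. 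The main obstacle I anticipate is the degenerate boundary $[\ineqVars]_\ID=0$: I must verify that the supremum value of the Lagrangian, the piecewise definition in (\ref{eq:zeroWorst}), and the lower-semicontinuous extension of the perspective at $a=0$ all coincide, which I will handle by examining the $[\ineqVars]_\ID\downarrow 0$ limit of (\ref{eq:worst}) directly.
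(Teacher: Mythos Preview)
Your approach is correct and, for the duality half, coincides with the paper's: both replace $\ambiguity$ by $\hatAmbiguity$ via Lemma~\ref{lem:ambiguity}, verify Slater's condition (you exhibit the explicit strictly feasible point $\densRatio\equiv 1$, the paper argues $\relint(\hatAmbiguity)\neq\emptyset$), and compute the Lagrange dual by per-$\ID$ maximization, splitting into the cases $[\ineqVars]_\ID>0$ and $[\ineqVars]_\ID=0$. One minor bookkeeping caveat: the per-$\ID$ supremum as you wrote it (without the constant share $\refProb\,\eqVar$) evaluates to $\refProb\,(\dual-\eqVar)$, not $\refProb\,\dual$; the missing $\eqVar$ reappears from the constant term $\nu\cdot 1$ in the Lagrangian, so after summing one still obtains $\expectation{\refProb}{\dualExtended}$. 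The paper sidesteps this by distributing $\eqVar=\sum_{\ID\in\setID}\refProb\,\eqVar$ into the per-$\ID$ pieces from the start.

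The convexity argument is where you genuinely diverge. The paper observes that for each fixed $(\densRatioHead_1,\ldots,\densRatioHead_\dimID)$ the Lagrangian $\lagrangeanHead$ is affine in $(\ineqVars,\eqVar)$ and convex in $\xVar$ (a non-negative combination of the $\targetFcn$), so $\lagrangeDualHead=\sup_{\densRatioHead}\lagrangeanHead$ is convex as a pointwise supremum of convex functions \cite[Section~3.2.3]{boyd2004convex}. Your route through the perspective of $\exp$, its monotonicity in the second argument, and the composition rule is equally valid, and it handles the boundary $[\ineqVars]_\ID=0$ via the lower-semicontinuous closure, which you correctly note coincides with the piecewise definition in~(\ref{eq:zeroWorst}). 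The paper's argument is shorter and needs no separate boundary analysis, since it never leaves the sup-of-Lagrangians representation; your argument is more explicit about why each term $\dual$ is itself jointly convex in $(\xVar,[\ineqVars]_\ID,\eqVar)$.
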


\begin{rem}[Proof Ideas of Lemma \ref{lem:lagrangeDual}]
The following proof of Lemma \ref{lem:lagrangeDual} is based on ideas from previous studies\cite{liu2023data,nishimura2021rat,hu2013kullback} to explicitly derive a Lagrange dual function. 
\end{rem}

\begin{proof}[Proof of Lemma \ref{lem:lagrangeDual}]
The following equation is obtained from Lemma \ref{lem:ambiguity}:

\begin{equation*}
\innerHat=\innerMax.
\end{equation*}
In addition, a strong dual problem arises from Slater's condition. At that time, Slater's condition consists of the existence of a point which is called strictly feasible, and the problem before reformulating must be convex \cite{boyd2004convex}. 
The objective function $\inner$ is linear (concave) in $\prob$ for each $\ID\in\setID$, and $\ambiguity=\hatAmbiguity$ is a convex set. Hence, $\innerHatS$ is convex programming (linear programming). 
Furthermore, $\relint(\hatAmbiguity)$ is a non-empty set if $\const>0$. Hence, $\probHead\in\relint(\hatAmbiguity)$ exists such that it is strictly feasible; namely, satisfies $\densRatio<d+1$ for all $\ID\in\setID$. 

The dual problem is denoted  as follows: 

\begin{equation*}
\inf_{(\ineqVars,\eqVar)\in\realPlus^\dimID\times\real}\lagrangeDual=\innerHat.
\end{equation*}
Here, let us introduce the Lagrange dual function \cite{boyd2004convex}, $\lagrangeDualHead:\setXVar\times\real^\dimID\times\real \rightarrow \real\cup\{\infty\}$ is associated with the problem in (\ref{eq:inner}) with $\hatAmbiguity$ as follows:

\begin{equation*}
\lagrangeDual=\displaystyle\sup_{(\densRatioHead_1,\,\densRatioHead_2,\,\cdots,\,\densRatioHead_\dimID)\in\realPlus^\dimID}\quad\lagrangean,
\end{equation*}

\begin{multline*}
\lagrangean
=\expectation{\refProb}{\densRatio\targetFcn} \\
+\expectation{\refProb}{\densRatio[\ineqVars]_\ID\lnFcn(\frac{1+\const}{\densRatio})}
+\eqVar(1-\expectation{\refProb}{\densRatio}).
\end{multline*}
The Lagrangean \cite{boyd2004convex}, $\lagrangeanHead: \setXVar\times\realPlus^\dimID\times\real^\dimID\times\real \rightarrow \real$, is associated with the problem in (\ref{eq:inner}) with $\hatAmbiguity$. 

The first statement of the lemma is proven by explicitly deriving the Lagrange dual function $\lagrangeDual$. First, we consider the case $\ID\in\{l\in\setID~|~[\ineqVars]_l>0\}$. The Lagrangean is concave in $\densRatio$; thus, the gradient of that in $\densRatio$ must be zero at the optimal maximizer $\densRatio^*$ as follows:

\begin{equation*}
\refProb\,\targetFcn 
+\refProb\,[\ineqVars]_\ID\{\lnFcn(\frac{1+\const}{\densRatio^*})
-1\}-\refProb\eqVar
=0.
\end{equation*}
Therefore, 

\begin{equation*}
\densRatio^*=(1+\const)\exp(\frac{\targetFcn-[\ineqVars]_\ID-\eqVar}{[\ineqVars]_\ID}). 
\end{equation*}
Hence, the terms in the Lagrangean are as follows:

\begin{multline*}
\refProb\densRatio^*\targetFcn
+
\refProb\densRatio^*[\ineqVars]_\ID\lnFcn(\frac{1+\const}{\densRatio^*}) 
+\refProb\eqVar(1-\densRatio^*) \\
=\refProb\dual
.
\end{multline*}
Subsequently, let us consider the case $\ID\in\{l\in\setID~|~[\ineqVars]_l=0\}$. Then, the Lagrangean $\lagrangean$ is affine in $\densRatioHead_\ID$ on $\realPlus$ as follows:

\begin{equation*}
\begin{matrix}
\refProb\densRatio\targetFcn+\\
\quad\quad\quad
\refProb\eqVar(1-\densRatio)\in
\end{matrix}
\begin{cases}
[\refProb\eqVar,\,\infty), & \text{($\eqVar<\targetFcn$)}, \\
\{\refProb\eqVar\}, & \text{($\eqVar=\targetFcn$)}, \\
(-\infty,\,\refProb\eqVar], & \text{($\eqVar>\targetFcn$)}. \\
\end{cases}
\end{equation*}
Lastly, considering both cases, we can explicitly denote it as follows:

\begin{equation*}
\forall\ineqVars\in\realPlus^\dimID, \quad \lagrangeDual = \innerDual.
\end{equation*}
Hence, the first statement is proven.

We also prove the second statement of the lemma using the last equation. For each $(\densRatioHead_1,\,\cdots,\,\densRatioHead_\dimID)$, the Lagrangean $\lagrangean$ is convex on $\setXVar\times\realPlus^\dimID\times\real$ if $\targetFcn$ is convex on $\setXVar$ for each $\ID\in\setID$. Hence, based on the result in \cite[Section 3.2.3]{boyd2004convex}, the Lagrange dual function $\lagrangeDual$ is also convex.
\end{proof}

\begin{proof}[Proof of Theorem \ref{thm:strongDual}]
First, we prove the first statement. For each $\xVar$ in $\setXVar$, the maximal value of (\ref{eq:DROC}) is equal to the infimum of (\ref{eq:dualDROC}), based on Lemma \ref{lem:lagrangeDual}. Hence, the minimization regarding $\xVar$ in (\ref{eq:DROC}) and (\ref{eq:dualDROC}) are identical, yielding the first statement. 


Subsequently, the second statement is proven. 
Suppose that $\targetFcn$ is strictly convex and continuous, and $\setXVar$ is bounded and closed convex. By naturally extending the result in \cite[Section 3.2.3]{boyd2004convex} to strictly convex functions, the objective function of (\ref{eq:DROC}) is also strictly convex on $\setXVar$. Therefore, the set of optimal minimizers for the problem contains one point at most \cite[Section 4.2.1]{boyd2004convex}. In addition, according to the extreme value theorem \cite{keisler2012elementary}, the optimal minimizer set contains at least one point; therefore, it must be unique.   

Finally, we prove the third statement. Suppose that $\targetFcn$ is convex and class $C^k$ on an open convex set $\setXVar$. Then, based on Lemma \ref{lem:lagrangeDual}, 
the objective function $\innerDual$ is convex on $\setXVar\times\realPlus^\dimID\times\real$. In addition, the function $\dual$ is clearly class $C^k$ because $\targetFcn$ is class $C^k$ on $\setXVar\times\realPlusPlus^\dimID\times\real$. Hence, the third statement is proven.   
\end{proof}

\begin{proof}[Proof of Theorem \ref{thm:knapsack}]
Let $(1+\const)\,[\zVar]_\ID=\densRatio$; then, $\ambiguity=\ambiguityKnapsack$. Therefore, we obtain the following:

\begin{equation*}
\innerMax=\max_{\zVar\in\ambiguityKnapsack} \quad (1+\const)\innerKnapsack. 
\end{equation*} 
Hence, we obtain the first statement. 

In addition, let $\constC$ be in $\{1,\cdots,\dimID\}$, and: 

\begin{equation*}
(1+\const)\,\refProb=\frac{1}{\constC}. 
\end{equation*}
Then, the following equation is obtained: 

\begin{equation*}
\ambiguityKnapsack=\{\zVar\in[0,\,1]^\dimID~|~\frac{1}{\constC}\sum_{\ID\in\setID}[\zVar]_\ID=1\}.
\end{equation*}
Furthermore, the inner maximization problem of (\ref{eq:knapsackRC}) is convex maximization in $\zVar\in\ambiguityKnapsack$. Therefore, based on the result in \cite[Theorem 6.12]{6282663}, we only need to consider that $\zVar$ is in the vertices of $\ambiguityKnapsack$ and $\constC$ is in $\{1,\,\cdots,\,\dimID\}$, $\ambiguityKnapsack\cap\{0,\,1\}^\dimID$. 
Hence, we have the following results:

\begin{multline*}
\exists\zVar^*\in\ambiguityKnapsack\cap\{0,\,1\}^\dimID, \quad 
(1+\const)\sum_{\ID\in\setID}\refProb\,\targetFcnHead(\xVar,\,\ID)\,[\zVar^*]_\ID \\ 
=\innerMaxKnapsackUniform, 
\end{multline*}

\begin{multline*}
\exists(\ID_1^{*},\cdots,\ID_\constC^{*})\in\ambiguityC, \quad \\
\max_{\zVar\in\ambiguityKnapsack\cap\{0,\,1\}^\dimID} \quad (1+\const)\innerKnapsack=\sum_{l=1}^{\constC} \frac{\targetFcnHead(\xVar,\,\ID_l^{*})}{\constC}.
\end{multline*}
The last results are equivalent to the following inequalities: 

\begin{multline*}
\max_{\zVar\in\ambiguityKnapsack\cap\{0,1\}^\dimID} \quad (1+\const)\innerKnapsack \\
\ge\innerMaxKnapsackUniform,
\end{multline*}

\begin{multline*}
\max_{\zVar\in\ambiguityKnapsack\cap\{0,1\}^\dimID} \quad (1+\const)\innerKnapsack \\
\le\innerMaxKnapsackUniform. 
\end{multline*}
Hence, the second statement is obtained.
\end{proof}

\section{Numerical Experiments}

This section presents numerical examples to demonstrate the effectiveness of the proposed method. We compare the proposed DDROC method $(\const > 0)$ with the SOC method in terms of the worst-case and average performances.

\subsection{Settings: Patroller Agent Design}

Let us consider a finite undirected graph $\mathcal{G}(\setID,\,\mathcal{E})$ and a discrete-time Markov chain, which represents a sequence of a patroller agent state $X_t\in\setID$ for discrete time $t\in\{0,\,1,\,\cdots,\,\infty\}$. Here, $\setID=\{1,\,2,\,\cdots,\,\dimID\}$ is a set of nodes (states) and $\mathcal{E}\subseteq\setID\times\setID$ is a set of edges (connections between states). The Markov property is satisfied; namely, $\distribution[X_t=j_t~|~X_{t-1}=j_t-1]=\distribution[X_t=j_t~|~X_{t-1}=j_{t-1},\, \cdots,\, X_{0}=j_{0}]$ for all $j_t\in\setID$. 
The Markov chain has an associated transition matrix $\boldsymbol{P}\in\real^{\dimID\times\dimID}$, whose component in the $j$-th row and $k$-th column denotes the transition probability from state $j$ to $k$, $[\boldsymbol{P}]_{j,\,k}=\distribution[X_t=k~|~X_{t-1}=j]$. 

We introduce the mean hitting time minimization problem of a patrolling agent for a given graph \cite{diaz2023distributed}. For a Markov chain, the mean hitting time is defined as the average time to first reach goal states \cite{norris1998markov}. First, let us suppose that $\boldsymbol{P}$ belongs to $\mathcal{M}_{\boldsymbol{\pi}}^*$. Here, $\mathcal{M}_{\boldsymbol{\pi}}^*$ denotes the set of irreducible and reversible stochastic matrices with the stationary distribution of the Markov chain, $\boldsymbol{\pi}\in\realPlus^\dimID$; $[\boldsymbol{\pi}]_j$ is also given and matches the average time that the patroller spent for a state $j\in\setID$ in the long run. Subsequently, as defined in \cite{diaz2023distributed}, let us denote the mean hitting time to the set of goal states $\mathcal{A}(\ID)=\{\ID\}\subseteq\setID$ with a random variable $\ID$:

\begin{equation}\label{eq:hittingTime}
\targetFcn=\boldsymbol{\pi}^\top(\eye_{\dimID} - \boldsymbol{E}_{\mathcal{A}}(\ID) \boldsymbol{P} \boldsymbol{E}_{\mathcal{A}}(\ID))\boldsymbol{\delta}_{\mathcal{A}}(\ID). 
\end{equation}
Here, $\xVar=\vect(\boldsymbol{P})$ is a decision variable and $\boldsymbol{\delta}_{\mathcal{A}}(\ID)\in\real^\dimID$ is a vector valued in $\{0,1\}$. $[\boldsymbol{\delta}_{\mathcal{A}}(\ID)]_j=1$ if $j\notin\mathcal{A}(\ID)$; otherwise, $[\boldsymbol{\delta}_{\mathcal{A}}(\ID)]_j=0$. Futhermore, $\boldsymbol{E}_{\mathcal{A}}(\ID)=\diag(\boldsymbol{\delta}_{\mathcal{A}}(\ID))\in\real^{\dimID\times\dimID}$. The mean hitting time $\targetFcn$ in (\ref{eq:hittingTime}) is a convex function in $\xVar$ on $\mathcal{M}_{\boldsymbol{\pi}}^*$.

\subsection{DDROC problems of Patroller Agent Design}

We consider the DDROC problem in (\ref{eq:DROC}) associated with the mean hitting time $\targetFcn$ in (\ref{eq:hittingTime}). Originally in (\ref{eq:hittingTime}), the objective function was either a weighted average value or the worst-case value of the mean hitting time. Unfortunately, these weights are often difficult to assign because noteworthy nodes may be unknown. Instead of assigning the weights, we consider the distributional uncertainty $\ambiguity$ of the weights $\prob$, as denoted in the following: 

\begin{equation}\label{eq:hittingTimeDROC}
\begin{split}
&\min_{\boldsymbol{P}\in\mathcal{M}_{\boldsymbol{\pi}}^*} \quad \max_{\probHead\in\ambiguity} \quad \expectation{\prob}{\boldsymbol{\pi}^\top(\eye_{\dimID} - \boldsymbol{E}_{\mathcal{A}}(\ID) \boldsymbol{P} \boldsymbol{E}_{\mathcal{A}}(\ID) \boldsymbol{\delta}_{\mathcal{A}}(\ID)}, \\
&\text{s.t.}~[\boldsymbol{P}]_{j,\,k}=0,\quad \forall (j,\,k)\notin\mathcal{E}.
\end{split}
\end{equation}
Theorem \ref{thm:knapsack} states that the problem in (\ref{eq:hittingTimeDROC}) is equivalent to the minimization of the average $\targetFcn$ of the worst $\constC$ nodes. Specifically, we aim to set $\ambiguity$, or in short, set $\refProb$ to the uniform distribution and set $\const$ such that $\constC$ matches the desired value. Furthermore, Theorem \ref{thm:strongDual} states that the reformulated DDROC problem in (\ref{eq:dualDROC}) reduces to a smooth convex programming problem because $\targetFcn$ in (\ref{eq:hittingTime}) is convex and clearly smooth. 

We consider two types of graphs in numerical experiments: 1) the San Francisco data set in \cite{diaz2023distributed} ($|\setID|=\dimID=18$ and $|\mathcal{E}|=78$), and 2) the random graphs generated using the Watts-Strogatz model\cite{watts1998collective}. Fig. \ref{figurelabel} shows the graph of the San Francisco data set. The sizes of the random graphs were set to $|\setID|=\dimID=36$ or $54$. The average degree of these random graphs, including the edges of the self-loop, was set to three. The randomness parameter of the Watts-Strogatz model was set to $0.05$. The code in \cite{mathworksBuildWattsStrogatz} was used to create the random graphs, which were created five times for each size, using different random seeds. The stationary distribution $\boldsymbol{\pi}$ for each graph was set to the uniform distribution. 

For the DDROC problem in (\ref{eq:hittingTimeDROC}), $\const$ in (\ref{eq:ambiguity}) which defines the size of $\ambiguity$, was set so that $\constC=\dimID/\,(1+\const)$ is integer. 

\begin{figure}[t]
   \centering
   \includegraphics[scale=0.65,clip]{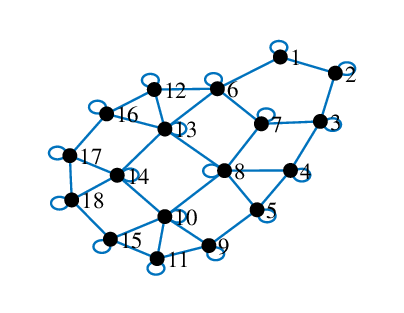}
   \caption{Graph of the San Francisco data set in \cite{diaz2023distributed}.}
   \label{figurelabel}
\end{figure}

\subsection{Verification of Solvability and the Explainability}

We used the fmincon function in MATLAB\cite{coleman1999optimization} to solve the SOC and DDROC problems. Table \ref{table:result} lists the mean hitting time results for each local optimal solution determined by the fmincon function. 

A global optimal solution for general smooth convex programming can be obtained using solvers that find a local optimal solution (solvability). In addition, as described in Theorem \ref{thm:knapsack}, we can confirm that the proposed method with each $\constC$ could minimize the average value of the mean hitting time collection of the worst $\constC$ nodes in Table \ref{table:result} (explainability). 

\begin{table}[t]
\caption{Results of Numerical Experiments (Average Value of the Mean Hitting Time for Each Node). }
\label{table:result}
\begin{center}
\begin{tabular}{l||c|c|c||c}
\hline

\multicolumn{5}{l}{San Francisco ($|\setID|\,(=\dimID)=18,~|\mathcal{E}|=78$)}  \\
\hline
\multirow{2}{*}{Node} & \multicolumn{3}{c||}{Proposed Method} & SOC \\ \cline{2-4}
    & $\constC=1$ & $\constC=5$ & $\constC=9$ & Method \\

\hline
Worst & $\boldsymbol{37.4}$ & $38.7$ & $39.8$ & $41.1$ \\
\hline
Worst $5$ & $35.7$ & $\boldsymbol{33.9}$ & $34.1$ & $34.7$ \\
\hline
Worst $9$ & $33.9$ & $32.4$ & $\boldsymbol{32.3}$ & $32.5$ \\
\hline
Mean & $30.6$ & $29.4$ & $29.1$ & $\boldsymbol{29.0}$ \\
\hline \hline



\multicolumn{5}{l}{Watts-Strogatz Model ($|\setID|\,(=\dimID)=36,~|\mathcal{E}|=36\times3$)}  \\
\hline
\multirow{2}{*}{Node} & \multicolumn{3}{c||}{Proposed Method} & SOC \\ \cline{2-4}
    & $\constC=1$ & $\constC=9$ & $\constC=18$ & Method \\

\hline
Worst & $\boldsymbol{84.0}$ & $86.2$ & $87.4$ & $88.8$ \\
\hline
Worst $9$ & $82.0$ & $\boldsymbol{81.3}$ & $82.0$ & $83.2$ \\
\hline
Worst $18$ & $79.5$ & $\boldsymbol{78.5}$ & $\boldsymbol{78.5}$ & $79.2$ \\
\hline
Mean & $74.1$ & $73.3$ & $72.8$ & $\boldsymbol{72.4}$ \\
\hline \hline

\multicolumn{5}{l}{Watts-Strogatz Model ($|\setID|\,(=\dimID)=54,~|\mathcal{E}|=54\times3$)}  \\
\hline
\multirow{2}{*}{Node} & \multicolumn{3}{c||}{Proposed Method} & SOC \\ \cline{2-4}
    & $\constC=1$ & $\constC=18$ & $\constC=27$ & Method \\

\hline
Worst & $\boldsymbol{162.9}$ & $167.9$ & $170.5$ & $173.0$ \\
\hline
Worst $18$ & $159.7$ & $\boldsymbol{157.9}$ & $159.3$ & $161.2$ \\
\hline
Worst $27$ & $155.1$ & $153.1$ & $\boldsymbol{152.7}$ & $154.1$ \\
\hline
Mean & $142.9$ & $141.4$ & $140.1$ & $\boldsymbol{139.3}$ \\
\hline
\end{tabular}
\end{center}
\end{table}

\section{Conclusions}
This study presents a DDROC reformulation method based on the density ratio, which bounds the TV distance\cite{10.3150/20-BEJ1256}. The proposed method reformulates DDROC problems into one-layer smooth convex programming problems with only a non-negative constraint of the Lagrange multiplier. Demonstrations associated with the patroller-agent design were performed. 

Specifically, in this study, we studied DDROC problems without constraints related to distributional uncertainties. Problems involving distributionally robust constraints merit further study. Another challenge is to extend the proposed theory to a superset of optimal transport balls.

\addtolength{\textheight}{-0cm}   








\bibliography{AACC2025}
\bibliographystyle{IEEEtran}

\end{document}